\newtheorem{Thm}{Theorem} 
\newtheorem{Lem}[Thm]{Lemma}
\newtheorem{Prop}[Thm]{Proposition}
\newtheorem{Cor}[Thm]{Corollary}
\newtheorem{Con}[Thm]{Conjecture}
\theoremstyle{definition}
\numberwithin{equation}{section}
\newenvironment{sm}{\bigl(\begin{smallmatrix}}{\end{smallmatrix}\bigr)}
\renewcommand{\phi}{\varphi}
\newcommand{\Z}{\mathrm{Z}}
\newcommand{\ZZ}{\mathbb{Z}}
\newcommand{\CC}{\mathbb{C}}
\newcommand{\FF}{\mathbb{F}}
\newcommand{\pcore}{\mathrm{O}}
\newcommand{\GL}{\operatorname{GL}}
\newcommand{\SL}{\operatorname{SL}}
\newcommand{\PSU}{\operatorname{PSU}}
\newcommand{\PSL}{\operatorname{PSL}}
\newcommand{\PGL}{\operatorname{PGL}}
\newcommand{\Sz}{\operatorname{Sz}}
\newcommand{\Irr}{\operatorname{Irr}}
\newcommand{\Syl}{\operatorname{Syl}}
\newcommand{\Ker}{\operatorname{Ker}}
\newcommand{\diag}{\operatorname{diag}}
\mathchardef\ordinarycolon\mathcode`\:  
\title{Restrictions of characters in $p$-solvable groups}
\author{Damiano Rossi\footnote{Arbeitsgruppe Algebra und Zahlentheorie, Bergische Universität Wuppertal, Gaußstraße~20, 42119 Wuppertal, Germany, \href{mailto:rossi@uni-wuppertal.de}{rossi@uni-wuppertal.de}} \ and Benjamin Sambale\footnote{Institut für Algebra, Zahlentheorie und Diskrete Mathematik, Leibniz Universität Hannover, Welfengarten~1, 30167 Hannover, Germany,
\href{mailto:sambale@math.uni-hannover.de}{sambale@math.uni-hannover.de}}}
\date{\today}
\begin{document}
\frenchspacing
\maketitle

\begin{abstract}\noindent
Let $G$ be a $p$-solvable group, $P\le G$ a $p$-subgroup and $\chi\in\Irr(G)$ such that $\chi(1)_p\ge|G:P|_p$. We prove that the restriction $\chi_P$ is a sum of characters induced from subgroups $Q\le P$ such that $\chi(1)_p=|G:Q|_p$. This generalizes previous results by Giannelli--Navarro and Giannelli--Sambale on the number of linear constituents of $\chi_P$. 
Although this statement does not hold for arbitrary groups, we conjecture a weaker version which can be seen as an extension of Brauer--Nesbitt's theorem on characters of $p$-defect zero. It also extends a conjecture of Wilde.
\end{abstract}

\textbf{Keywords:} $p$-solvable groups; character restriction; linear constituents\\
\textbf{AMS classification:} 20C15, 20D20

\section{Introduction}

Let $p$ be a prime and let $G$ be a finite group whose order is divisible by $p^n$, but not by $p^{n+1}$. Recall that an irreducible character $\chi\in\Irr(G)$ has $p$-\emph{defect} $0$, if the degree $\chi(1)$ is divisible by $p^n$. A well-known theorem by Brauer and Nesbitt~\cite[Theorem~1]{BrauerNesbitt} asserts that $\chi$ vanishes on all elements $g\in G$ of order divisible by $p$. Equivalently, the restriction of $\chi$ to a Sylow $p$-subgroup $P$ of $G$ is a multiple of the regular character of $P$. In particular, all irreducible characters $\theta\in\Irr(P)$ appear as constituents of $\chi_P$ with multiplicity at least $\theta(1)$ in this case. In \cite{GSn,GAn,GN}, Giannelli and Navarro investigated a more general situation where $\chi(1)$ is divisible by $p$ and $\chi_P$ has at least one linear constituent $\lambda\in\Irr(P)$. They conjectured (and proved in many cases) that $\chi_P$ has at least $p$ distinct linear constituents. For $p$-solvable groups $G$, they actually showed the stronger statement that $(\lambda_Q)^P$ is a summand of $\chi_P$ for some subgroup $Q\le P$ with index $|P:Q|=p$. 

In a subsequent paper~\cite{GS}, Giannelli and the second author studied the following blockwise version of the conjecture. Let $B$ be a $p$-block of $G$ with defect group $D$. Let $\chi\in\Irr(B)$ be of positive height and assume that $\chi_D$ has a linear constituent. Then $\chi_D$ has at least $p$ distinct linear constituents. Although we proved this conjecture in some cases, the $p$-solvable group case was left open at that time. 

Meanwhile we realized that the restriction to Sylow subgroups or to defect groups was unnecessary and perhaps misleading in the latter case. The aim of the present paper is to prove the following more general theorem and its corollary for $p$-solvable groups.

\begin{Thm}\label{main}
Let $P$ be a Sylow $p$-subgroup of a finite $p$-solvable group $G$. If $\chi\in\Irr(G)$, then $\chi_P$ is a sum of characters induced from subgroups $Q\le P$ such that $\chi(1)_p=|P:Q|$.
\end{Thm}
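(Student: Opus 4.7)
The plan is to induct on $|G|$, using Clifford theory together with Gallagher's theorem and the projection formula to reduce to smaller groups. The base case where $G$ is a $p$-group is immediate from It\^o's theorem on monomiality: $\chi=\lambda^G$ for a linear $\lambda$ on a subgroup of index $\chi(1)=\chi(1)_p$. The case $P=1$ (i.e.\ $G$ a $p'$-group) is trivial.

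For the inductive step I would set $N:=\pcore_{p'}(G)$, pick an irreducible constituent $\theta$ of $\chi_N$, let $T:=G_\theta$ be its inertia subgroup, and write $\chi=\psi^G$ for the Clifford correspondent $\psi\in\Irr(T\mid\theta)$. I would arrange $P$ so that $P_T:=P\cap T$ is a Sylow $p$-subgroup of $T$.

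First consider the case $T=G$, so $\theta$ is $G$-invariant. Since $\theta(1)$ divides $|N|$, it is a $p'$-number. The obstruction to extending $\theta$ from $N$ to $G$ lies in $\cohom^2(G/N,\CC^{\times})$ and has order dividing $\theta(1)$; together with $p$-solvability of $G/N$ this forces the obstruction to vanish, so $\theta$ extends to some $\tilde\theta\in\Irr(G)$. Gallagher's correspondence then gives $\chi=\tilde\theta\cdot\beta$ for a unique $\beta\in\Irr(G/N)$, with $\beta(1)_p=\chi(1)_p$ (the equality uses $\theta(1)_p=1$). The inductive hypothesis applied to $(G/N,PN/N,\beta)$---legitimate since $PN/N\cong P$ ($N$ being a $p'$-group) and $|G/N|<|G|$---gives $\beta_P=\sum_j\phi_j^P$ with $|P:Q_j|=\beta(1)_p$. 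The projection formula $\tilde\theta_P\cdot\phi_j^P=(\tilde\theta|_{Q_j}\cdot\phi_j)^P$ then yields
\[
  \chi_P \;=\; \tilde\theta_P\cdot\beta_P \;=\; \sum_j \bigl(\tilde\theta|_{Q_j}\cdot\phi_j\bigr)^P,
\]
as required, with each constituent induced from a subgroup of index $\chi(1)_p$.

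For the case $T<G$, apply the inductive hypothesis to $(T,P_T,\psi)$ to obtain $\psi_{P_T}=\sum_j\phi_j^{P_T}$ with $|P_T:R_j|=\psi(1)_p$, and use Mackey's formula $\chi_P=\sum_{x\in P\backslash G/T}\bigl({}^x\psi|_{P\cap{}^xT}\bigr)^P$. The identity double coset contributes $(\psi_{P_T})^P=\sum_j\phi_j^P$, and by transitivity each summand is induced from a subgroup of $P$ of index $|G:T|_p\cdot\psi(1)_p=\chi(1)_p$, exactly as desired. The main obstacle is the remaining double-coset contributions for $x\notin T$: here $P\cap{}^xT$ need not be Sylow in ${}^xT$, so the inductive hypothesis does not apply directly, and the resulting induced character may have the wrong index in $P$. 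Handling these terms---either by establishing a structural result such as $G=PT$ (which would make the Mackey sum collapse to its identity term) in the relevant $p$-solvable setting, or by a secondary reduction that redistributes the non-identity contributions across subgroups of the required index---is the delicate heart of the proof.
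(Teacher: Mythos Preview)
Your imprimitive case ($T<G$) is not the delicate heart of the proof; in fact it is routine, and the paper's Lemma~2 dispatches it in a few lines. The point you are missing is that the index can only be \emph{too large}, never too small. Choose double coset representatives $x$ so that $P\cap {}^xT\le {}^xP_T$ (Sylow's theorem allows this). Restricting ${}^x\psi_{{}^xP_T}=\sum_j({}^x\phi_j)^{{}^xP_T}$ further to $P\cap{}^xT$ and applying Mackey once more produces characters induced from subgroups of the form ${}^{xy}R_j\cap P$, each of order at most $|R_j|$. Hence their index in $P$ is at least $|P|/|R_j|=|G:T|_p\,\psi(1)_p=\chi(1)_p$. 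Now simply induce in two stages through any intermediate subgroup of index exactly $\chi(1)_p$; transitivity of induction finishes the job.

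The genuine gap is in your $T=G$ case. First, your induction does not terminate: if $N=\pcore_{p'}(G)=1$ and $G$ is not a $p$-group (take $G=S_4$, $p=2$), then $\theta=1_N$, $T=G$, $\beta=\chi$, and you are applying the ``inductive hypothesis'' to $G/N=G$ itself. Second, the extension argument is not valid as stated: the obstruction class has order dividing $\theta(1)$, a $p'$-number, but $p$-solvability of $G/N$ in no way forces the $p'$-part of $\cohom^2(G/N,\CC^\times)$ to vanish. (One can rescue the extension step by extending $\theta$ only to $PN$, where $PN/N\cong P$ is a $p$-group and the coprimeness argument does apply, but then one must reorganise the induction around $PN$ and $PN/N$, and still deal separately with the case $N=1$.)

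The paper avoids both difficulties by reducing along a different axis. After handling the imprimitive case exactly as above, it assumes $\chi$ is primitive and invokes Isaacs' factorisation $\chi=\alpha\beta$ with $\alpha$ $p$-special and $\beta$ $p'$-special, so $\chi(1)_p=\alpha(1)$. Gajendragadkar's theorem gives $\alpha_P\in\Irr(P)$, whence $\alpha_P=\mu^P$ for a linear $\mu$ on some $T\le P$ with $|P:T|=\alpha(1)=\chi(1)_p$, and the projection formula yields $\chi_P=(\mu\beta_T)^P$. This sidesteps any need to extend characters of $\pcore_{p'}(G)$ or to worry about whether that subgroup is trivial.
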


The statement of \autoref{main} was of course inspired by \cite[Theorems~B and C]{GN}.
By using the Mackey formula, we can extend the above result to arbitrary $p$-subgroups.

\begin{Cor}\label{cormain}
Let $P$ be a $p$-subgroup of a finite $p$-solvable group $G$. Let $\chi\in\Irr(G)$ such that $\chi(1)_p\ge|G:P|_p$. Then $\chi_P$ is a sum of characters induced from subgroups $Q\le P$ such that $\chi(1)_p=|G:Q|_p$.
\end{Cor}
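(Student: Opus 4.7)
The plan is to invoke Theorem~\ref{main} for a Sylow $p$-subgroup $S$ of $G$ containing $P$, and then to transfer the resulting decomposition down to $P$ via the Mackey formula together with transitivity of induction. Applying Theorem~\ref{main} to $\chi$ yields $\chi_S=\sum_i(\psi_i)^S$, where each $\psi_i$ is a character of a subgroup $R_i\le S$ with $|S:R_i|=\chi(1)_p$. Mackey's formula applied to each summand gives
\[
\chi_P=(\chi_S)_P=\sum_i\sum_{g}\bigl({}^g\psi_i|_{R_i^g\cap P}\bigr)^P,
\]
where $g$ runs through a set of $(R_i,P)$-double coset representatives in $S$ and $R_i^g=g^{-1}R_ig$. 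This already writes $\chi_P$ as a sum of characters induced from subgroups of $P$, but the source subgroups $R_i^g\cap P$ need not have the prescribed order $|S|/\chi(1)_p$.

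To correct this, I would enlarge each source subgroup using transitivity of induction. For every pair $(i,g)$ one has $|R_i^g\cap P|\le|R_i|=|S|/\chi(1)_p$, and the hypothesis $\chi(1)_p\ge|G:P|_p$ is equivalent to $|S|/\chi(1)_p\le|P|$. Both $|R_i^g\cap P|$ and $|S|/\chi(1)_p$ are $p$-powers dividing $|P|$, so a standard chain argument in the $p$-group $P$ produces a subgroup $Q_{i,g}$ with $R_i^g\cap P\le Q_{i,g}\le P$ and $|Q_{i,g}|=|S|/\chi(1)_p$. Transitivity of induction then rewrites the above summand as $\bigl(({}^g\psi_i|_{R_i^g\cap P})^{Q_{i,g}}\bigr)^P$, an induced character from $Q_{i,g}\le P$, and the identity $|G:Q_{i,g}|_p=|S|/|Q_{i,g}|=\chi(1)_p$ certifies the required $p$-index.

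Beyond Theorem~\ref{main}, the argument reduces to bookkeeping: one application of Mackey, one application of transitivity, and the elementary fact that subgroups of a finite $p$-group can always be refined through intermediate $p$-power orders. I therefore expect no genuine obstacle to executing this plan; the only essential input is Theorem~\ref{main}, which is where $p$-solvability is used.
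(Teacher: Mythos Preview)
Your proposal is correct and follows essentially the same route as the paper: apply Theorem~\ref{main} to a Sylow $p$-subgroup $S\ge P$, restrict each induced summand to $P$ via Mackey, and then use the hypothesis $\chi(1)_p\ge|G:P|_p$ together with transitivity of induction to enlarge each source subgroup $R_i^g\cap P$ to an intermediate subgroup of the required order. The paper's argument is the same, with only cosmetic differences in notation.
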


In fact, under certain circumstances, $p$-solvable groups can be replaced by $\pi$-separable groups where $\pi$ is a set of primes. The details are outlined after the proof of \autoref{main} below. If $G=P$, then \autoref{main} is the well-known fact that characters of $p$-groups are monomial.

The following consequence of \autoref{main} was originally proven by Navarro (private communication).

\begin{Cor}\label{corsylow}
Let $P$ be a Sylow $p$-subgroup of a $p$-solvable group $G$. Then for every $\chi\in\Irr(G)$ there exist $Q\le P$ and a linear character $\lambda\in\Irr(Q)$ such that $\chi(1)_p=|P:Q|$ and $\lambda^P$ is a summand of $\chi_P$.
\end{Cor}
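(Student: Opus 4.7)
The plan is to deduce \autoref{corsylow} as a short consequence of \autoref{main} via a degree-counting argument. Applying \autoref{main} to $\chi$, I write $\chi_P = \sum_i \psi_i^P$, where each $\psi_i$ is a (possibly reducible) character of a subgroup $Q_i\le P$ with $|P:Q_i|=\chi(1)_p$. Comparing degrees on the two sides yields
\[
\chi(1) \;=\; \sum_i |P:Q_i|\,\psi_i(1) \;=\; \chi(1)_p \sum_i \psi_i(1),
\]
so that $\sum_i \psi_i(1) = \chi(1)_{p'}$, which is coprime to $p$.

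The next step is to extract a \emph{linear} constituent from some $\psi_i$. Since each $Q_i$ is a $p$-group, every irreducible constituent of $\psi_i$ has $p$-power degree. If no $\psi_i$ admitted a linear irreducible constituent, then every irreducible constituent of every $\psi_i$ would have degree divisible by $p$, and hence $p\mid\psi_i(1)$ for each nonzero $\psi_i$. Summing would force $p\mid\chi(1)_{p'}$, which is impossible. Therefore some $\psi_i$ has a linear constituent $\lambda\in\Irr(Q_i)$, and then $\lambda^P$ is a summand of $\psi_i^P$ and thus of $\chi_P$. Setting $Q:=Q_i$ furnishes the desired pair with $|P:Q|=\chi(1)_p$.

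I do not expect a real obstacle here, since \autoref{main} already does the heavy lifting: the counting step is elementary, and the only additional input is the standard observation that a character of a $p$-group none of whose irreducible constituents is linear must have degree divisible by $p$. In this sense \autoref{corsylow} is a routine corollary of \autoref{main}.
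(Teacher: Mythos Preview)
Your argument is correct and is essentially the same as the paper's proof: both apply \autoref{main} and use the degree equation $\chi(1)=\chi(1)_p\sum_i\psi_i(1)$ to force $\sum_i\psi_i(1)=\chi(1)_{p'}$, whence some constituent must be linear. The only cosmetic difference is that the paper decomposes into irreducible $\lambda_i\in\Irr(Q_i)$ from the outset, whereas you keep the $\psi_i$ possibly reducible and then pass to an irreducible constituent; the content is identical.
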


Using Kessar--Malle's theorem~\cite{KessarMalle} on the height zero conjecture, Giannelli--Navarro~\cite[Theorem~C]{GN} have shown that \autoref{corsylow} remains true for arbitrary groups $G$ whenever $P$ is abelian (here $Q$ can be chosen as a defect group of the block containing $\chi$).
The next corollary settles the open question in \cite{GS} mentioned above.

\begin{Cor}\label{corblock}
Let $B$ be a $p$-block of a $p$-solvable group $G$ with defect group $D$. Let $\chi\in\Irr(B)$ be of positive height such that $\chi_D$ has a linear constituent. Then $\chi_D$ has at least $p$ distinct linear constituents.
\end{Cor}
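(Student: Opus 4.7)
The plan is to reduce \autoref{corblock} directly to \autoref{cormain} applied with $P=D$. The key preliminary is the standard block-theoretic fact that any $\chi\in\Irr(B)$ satisfies $\chi(1)_p\ge|G:D|_p$, with the height $h(\chi)\ge0$ defined by $\chi(1)_p=p^{h(\chi)}|G:D|_p$. Thus \autoref{cormain} is applicable and yields a decomposition $\chi_D=\sum_i\psi_i^D$ with $\psi_i\in\Irr(Q_i)$, $Q_i\le D$, and $|G:Q_i|_p=\chi(1)_p$. Since $|G:Q_i|_p=|G:D|_p\cdot|D:Q_i|$, this forces $|D:Q_i|=p^{h(\chi)}$ for every $i$, and the positive-height hypothesis gives $|D:Q_i|\ge p$.

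Next I would use the hypothesized linear constituent $\lambda\in\Irr(D)$ of $\chi_D$ to pin down the shape of one of the $\psi_i$. Since $\lambda$ must be a constituent of some $\psi_i^D$, Frobenius reciprocity gives $\langle\psi_i,\lambda_{Q_i}\rangle\neq 0$; but $\lambda_{Q_i}$ is itself linear and hence irreducible, so $\psi_i=\lambda_{Q_i}$ is linear. Call this linear character $\mu$ and the corresponding subgroup $Q:=Q_i$. It remains to count the distinct linear constituents of the induced character $\mu^D$, since each of these will be a linear constituent of $\chi_D$.

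For the counting step I would pass to the abelianization $D^{\mathrm{ab}}:=D/[D,D]$: linear characters of $D$ correspond bijectively to elements of $\Irr(D^{\mathrm{ab}})$, and via Frobenius reciprocity the linear constituents of $\mu^D$ correspond exactly to the extensions of $\mu$ (viewed on the image of $Q$ in $D^{\mathrm{ab}}$) to all of $D^{\mathrm{ab}}$. We already know at least one extension exists, namely $\lambda$, so the number of such extensions equals $|D^{\mathrm{ab}}:Q[D,D]/[D,D]|=|D:Q[D,D]|$. Because $Q<D$ is a proper subgroup of a $p$-group, it is contained in a maximal subgroup $M$, and any such $M$ is normal of index $p$ and hence contains $[D,D]$; therefore $Q[D,D]\le M<D$ and $|D:Q[D,D]|\ge p$, which delivers the required $p$ distinct linear constituents.

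There is no real obstacle in this argument beyond invoking \autoref{cormain}; the only point that could look like a sleight of hand is the passage to $D^{\mathrm{ab}}$ to count extensions, but this is standard, and the lower bound $|D:Q[D,D]|\ge p$ rests entirely on the elementary fact that maximal subgroups of a $p$-group contain the commutator subgroup. The fact that the argument only needs one linear constituent to bootstrap up to $p$ is precisely the phenomenon originally conjectured in \cite{GS}.
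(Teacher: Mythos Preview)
Your argument is correct and follows essentially the same route as the paper: apply \autoref{cormain} with $P=D$, use Frobenius reciprocity to see that the summand $\psi_i^D$ containing the given linear constituent $\lambda$ must have $\psi_i=\lambda_{Q_i}$, and then count the linear constituents of $(\lambda_{Q_i})^D$. The only cosmetic difference is in this last count: the paper passes to an intermediate subgroup $Q\le R\le D$ with $|D:R|=p$ and invokes Gallagher's theorem to produce $p$ distinct linear constituents of $(\lambda_R)^D$, whereas you count directly in the abelianization and obtain $|D:Q[D,D]|\ge p$ extensions of $\lambda_Q$; your bound is at least as sharp and avoids naming Gallagher, but the two computations are really the same observation.
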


Initially, we believed that the statement of \autoref{main} could hold for arbitrary groups, until we found the counterexample $\PSU(5,2)$ for $p=2$. Nevertheless, we did not find any counterexamples to the following weaker version (which is algorithmically much easier to check).

\begin{Con}\label{con}
Let $P$ be a Sylow $p$-subgroup of a finite group $G$ and let $\chi\in\Irr(G)$. Then $\chi_P$ is an integral linear combination of characters induced from subgroups $Q\le P$ such that $\chi(1)_p=|P:Q|$.
\end{Con}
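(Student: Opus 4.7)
The natural plan is to imitate the proof of \autoref{main} using general character-theoretic tools in place of the Clifford theory available for $p$-solvable groups. Set $p^n := \chi(1)_p$, and write $V_n$ for the $\ZZ$-submodule of the character ring of $P$ spanned by characters induced from subgroups $Q \le P$ with $|P:Q| = p^n$. The goal is to show that $\chi_P \in V_n$.

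As a first step I would apply Brauer's induction theorem to write $\chi = \sum_i a_i \lambda_i^G$ with $a_i \in \ZZ$ and each $\lambda_i$ a linear character of a $p$-elementary subgroup $E_i \le G$. Restricting to $P$ via Mackey's formula yields
\[ \chi_P = \sum_{i,g} a_i \bigl(\lambda_i^g|_{P \cap E_i^g}\bigr)^P, \]
where $g$ runs over representatives of $P\backslash G/E_i$. This already exhibits $\chi_P$ as an integer combination of characters induced from the $p$-subgroups $P \cap E_i^g$. Whenever $Q := P \cap E_i^g$ satisfies $|P:Q| \ge p^n$, transitivity of induction lets us rewrite the summand $\mu^P = (\mu^{Q'})^P$ for any subgroup $Q'$ of $P$ with $Q \le Q'$ and $|P:Q'| = p^n$; such a $Q'$ always exists in the $p$-group $P$. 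These summands therefore already lie in $V_n$.

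The crucial obstacle lies in the remaining summands, where $|P:Q| < p^n$. Here $\mu^P$ cannot be directly rewritten as an induction from a single subgroup of $P$ of index exactly $p^n$. A natural tactic is to further expand $\mu$ using the monomiality of $p$-group characters, obtaining contributions of the form $\eta^P$ for linear characters $\eta$ of subgroups of $Q$; but only those subgroups of $Q$ of relative index $p^n/|P:Q|$ produce an induced character of the required index in $P$, and showing that the remaining ``wrong-index'' contributions cancel in the global sum appears to be the heart of the problem. This cancellation presumably requires additional input about $\chi$ beyond Brauer induction, for instance divisibility or congruence constraints on the values of $\chi$ at $p$-singular elements.

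A complementary approach would be to characterize $V_n$ intrinsically and verify that $\chi_P$ satisfies this characterization. For $n = 0$ the statement is vacuous, and for $p^n = |P|$ it reduces to Brauer--Nesbitt (the condition that characters of $p$-defect zero are multiples of the regular character on $P$). An interpolating criterion might hopefully be phrased via congruences among the values $\chi(g)$ or among decomposition numbers; but small explicit computations (already for $P \cong D_8$) show that $V_n$ is strictly smaller than the space of virtual characters of $P$ vanishing on elements of order exceeding $|P|/p^n$. Hence any such criterion must be more refined than a pure vanishing condition, and identifying the correct intrinsic description of $V_n$ appears to be the most delicate step along this route.
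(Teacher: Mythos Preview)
The statement you are attempting to prove is \autoref{con}, which in the paper is an \emph{open conjecture}, not a theorem. The paper does not contain a proof of it in general: it establishes the stronger statement \autoref{main} for $p$-solvable groups via Isaacs' theory of $\pi$-special characters, and then in Section~\ref{secevidences} verifies \autoref{con} (often in its strong form) only in special cases --- cyclic $P$, dihedral or quaternion Sylow $2$-subgroups, $\PSL(2,q)$, $\Sz(q)$ --- together with a structural analysis of a hypothetical minimal counterexample (\autoref{mincounter}). There is therefore no ``paper's own proof'' to compare against.

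Your outline is an honest sketch rather than a proof, and you have located the genuine obstruction precisely. Brauer induction combined with Mackey does give $\chi_P$ as an integer combination of characters induced from subgroups $Q\le P$, and the terms with $|P:Q|\ge\chi(1)_p$ can indeed be pushed into $V_n$ by intermediate induction. The entire difficulty is the terms with $|P:Q|<\chi(1)_p$: nothing in your argument forces these to cancel, and there is no known mechanism that does so in general. Your second approach --- characterising $V_n$ intrinsically --- is likewise the natural thing to try, and your observation that $V_n$ is strictly smaller than the vanishing locus already for $P\cong D_8$ shows why a naive value-based criterion will not suffice. Both of these are exactly the points at which the problem is open; the paper does not resolve them either, and instead bypasses them in the $p$-solvable case by using the factorisation $\chi=\alpha\beta$ into $p$-special and $p'$-special parts, which has no analogue for arbitrary groups.
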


Just like \autoref{cormain}, the conjecture implies an analogue statement for arbitrary $p$-subgroups.

\autoref{con} relates to a strong form of Brauer's induction theorem due to Willems~\cite{WillemsBInd}. If $\chi$ is realized by a module with vertex $Q\le P$, then Willems' theorem implies that $\chi_P$ is an integral linear combination of characters induced from subgroups of the form $P\cap Q^g$ where $g\in G$. In particular, if $\chi$ has height $0$, then $|P:P\cap Q^g|\ge|P:Q|\ge\chi(1)_p$. Therefore, \autoref{con} holds whenever $P$ is abelian by Kessar--Malle~\cite{KessarMalle}. In general we cannot expect to find a vertex $Q$ such that $\chi(1)_p\le|P:Q|$ (see \cite{Cline}). 

If $P=\langle g\rangle\le G$ is an arbitrary cyclic $p$-subgroup, then there is only one choice for $Q$. In this case \autoref{con} implies that $\chi(g)=0$ as long as $\chi(1)_p>|G:P|_p$. This is a special case of Wilde's Conjecture~\cite[Conjecture~1.1]{Wilde} (for $p$-elements). Apart from $p$-solvable groups, Wilde proved his conjecture also for symmetric groups. 
If true, \autoref{con} would also generalize the Brauer--Nesbitt theorem on characters of $p$-defect $0$.

In the next section we prove \autoref{main} and its corollaries. Then, we study minimal counterexamples to \autoref{con}. 
Among other results, we prove \autoref{con} whenever $G$ is one of the simple groups $\PSL(2,q)$ or $\Sz(q)$ as well as if $P$ is a dihedral or quaternion $2$-group. In these cases the integral linear combination has positive coefficients so that the statement of \autoref{main} remains true.

\section{Proofs}

Our notation is standard and follows Huppert's book~\cite{HuppertChar} and Navarro's books~\cite{Navarro,Navarro2}. 
We will make use of $\pi$-special characters where $\pi$ is a set of primes (in fact, $\pi=\{p\}$ or $\pi=p'$). 
Although the definition of a $\pi$-special character $\chi$ is somewhat technical, the reader only needs to know that every prime divisor of $\chi(1)$ lies in $\pi$. 

\begin{Lem}\label{lemmain}
Let $P$ be a Sylow $p$-subgroup of a finite group $G$. If $\chi\in\Irr(G)$ is imprimitive and \autoref{main} holds for every proper subgroup of $G$, then \autoref{main} holds for $\chi$.
\end{Lem}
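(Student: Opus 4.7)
Since $\chi$ is imprimitive, write $\chi=\psi^G$ with $\psi\in\Irr(H)$ for some proper subgroup $H<G$. The plan is to expand $\chi_P$ via Mackey's formula and then apply the inductive hypothesis to the proper (and still $p$-solvable) subgroups ${}^gH$ appearing in the double coset decomposition. Concretely, Mackey yields
\[
\chi_P=(\psi^G)_P=\sum_{g\in P\backslash G/H}\bigl(({}^g\psi)_{L_g}\bigr)^P,\qquad L_g:={}^gH\cap P,
\]
where each $L_g$ is a $p$-subgroup of ${}^gH$.

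For each double coset representative $g$, we will choose a Sylow $p$-subgroup $P_g$ of ${}^gH$ with $L_g\le P_g$, and apply \autoref{main} to the proper subgroup ${}^gH$ to obtain
\[
({}^g\psi)_{P_g}=\sum_i\mu_i^{P_g},\qquad \mu_i\in\Irr(R_i),\ R_i\le P_g,\ |P_g:R_i|=\psi(1)_p.
\]
Restricting to $L_g$ and running Mackey once more inside the $p$-group $P_g$ produces
\[
({}^g\psi)_{L_g}=\sum_i\sum_{h\in L_g\backslash P_g/R_i}\bigl(({}^h\mu_i)_{{}^hR_i\cap L_g}\bigr)^{L_g},
\]
so that $\chi_P$ becomes a sum of characters of the form $\phi^P$ with $\phi$ a character of $T:={}^hR_i\cap L_g\le P$. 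Using $|P_g|=|H|_p$ and $\chi(1)_p=|G:H|_p\psi(1)_p$ one checks $|P:T|\ge|P|/|R_i|=\chi(1)_p$.

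The subtlety is that this inequality can be strict, precisely when ${}^hR_i\not\subseteq L_g$, which can occur when $L_g\lneq P_g$. To handle this, we will use that $|P:T|$ is a $p$-power at least $\chi(1)_p$: in a finite $p$-group any subgroup sits inside subgroups of every intermediate $p$-power order, so we may pick $T\le Q\le P$ with $|P:Q|=\chi(1)_p$. Then $\phi^P=(\phi^Q)^P$, and expanding $\phi^Q$ as a sum of characters of $Q$ yields the required expression of $\chi_P$ as a sum of characters induced from subgroups of index exactly $\chi(1)_p$. This index-adjustment step is the main obstacle; the remainder is a routine double application of Mackey combined with the inductive hypothesis.
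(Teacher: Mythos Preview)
Your proof is correct and follows essentially the same route as the paper: apply Mackey to $(\psi^G)_P$, invoke the inductive hypothesis on (conjugates of) $H$ to decompose the restriction to a Sylow $p$-subgroup, apply Mackey a second time inside the $p$-group, and then fix the possibly-too-large index by passing to an intermediate subgroup $Q$ with $|P:Q|=\chi(1)_p$ using that $p$-groups have subgroups of every intermediate order. The only cosmetic difference is that the paper first replaces $P$ by a conjugate so that $H\cap P\in\Syl_p(H)$ and applies the hypothesis once to $H$, whereas you keep $P$ fixed and apply the hypothesis separately to each ${}^gH$; these are equivalent.
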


\begin{proof}
Since $\chi$ is imprimitive, there exist $H<G$ and $\psi\in\Irr(H)$ such that $\chi=\psi^G$.
We may choose $P$ such that $S:=H\cap P$ is a Sylow $p$-subgroup of $H$. Since \autoref{main} holds for $H$, we may write $\psi_S=\sum_{i=1}^n\lambda_i^S$ where $\lambda_i\in\Irr(Q_i)$ and $Q_i\le S$ such that $\psi(1)_p=|S:Q_i|$ for $i=1,\ldots,n$. By Sylow's theorem, there exist representatives $g_1,\ldots,g_k\in G$ for the double cosets $H\backslash G/P$ such that $S_i:=H^{g_i}\cap P\le S^{g_i}$ for $i=1,\ldots,k$. The Mackey formula (see \cite[Theorem~17.4]{HuppertChar}) yields
\[\chi_P=(\psi^G)_P=\sum_{i=1}^k\bigl((\psi^{g_i})_{S_i}\bigr)^P=\sum_{i=1}^k\bigl(((\psi_S)^{g_i})_{S_i}\bigr)^P=\sum_{i=1}^k\sum_{j=1}^n\bigl(((\lambda_j^{g_i})^{S^{g_i}})_{S_i}\bigr)^P.\]
Here, $\bigl((\lambda_j^{g_i})^{S^{g_i}}\bigr)_{S_i}$ is a sum of characters induced from $Q_j^{g_ix}\cap S_i$ for some $x\in S^{g_i}$.
Since 
\[\chi(1)_p=|G:H|_p\psi(1)_p=|G:Q_j|_p\le |P:Q_j^{g_ix}\cap S_i|,\] 
we can find $Q_j^{g_ix}\cap S_i\le T_{ij}^{(x)}\le P$ such that $\chi(1)_p=|P:T_{ij}^{(x)}|$. Hence, the claim holds for $\chi$.
\end{proof}

By using \autoref{lemmain} and Isaacs' theory of $\pi$-special characters, we can now prove \autoref{main}.

\begin{proof}[Proof of \autoref{main}.]
We proceed by induction on the order of $G$. By \autoref{lemmain}, we can assume $\chi$ to be primitive. By a theorem of Isaacs (see \cite[Theorem~40.8]{HuppertChar}), $\psi=\alpha\beta$ where $\alpha$ is $p$-special and $\beta$ is $p'$-special. Thus, $\chi(1)_p=\alpha(1)$. By a theorem of Gajendragadkar (see \cite[Theorem~40.11]{HuppertChar}), we deduce that $\alpha_P\in\Irr(P)$. Since $P$ is an M-group, there exists $T\le P$ and a linear character $\mu\in\Irr(T)$ such that $\mu^P=\alpha_P$. It follows that
$$\chi_P=\alpha_P\beta_P=\mu^P\beta_P=(\mu\beta_T)^P,$$
where the last equality follows by \cite[Theorem~17.3]{HuppertChar}. Since $\chi(1)_p=\alpha(1)=|P:T|$ the result follows.
\end{proof}

Given the proof above, it is natural to ask whether we can replace $p$ by an arbitrary set of primes $\pi$ and $p$-solvable groups by $\pi$-separable groups. The only additional hypotheses we require is that every $\pi$-subgroup of $G$ is an M-group and we can always find the groups $T_{ij}^{(x)}$ in the proof of \autoref{lemmain}. Both requirements are true whenever $G$ has nilpotent $\pi$-Hall subgroups. 

Next, we obtain \autoref{cormain} from \autoref{main} via an application of the Mackey formula.

\begin{proof}[Proof of \autoref{cormain}.]
Let $P$ be a $p$-subgroup of $G$ and $\chi\in\Irr(G)$ with $\chi(1)_p\ge|G:P|_p$. Consider $S\in\Syl_p(G)$ such that $P\le S$. By \autoref{main} there exist $Q_i\le S$ and characters $\lambda_i$ of $Q_i$ such that $\chi(1)_p=|S:Q_i|$ and $\chi_S=\sum_i \lambda_i^S$. By the Mackey formula $\chi_P$ is a sum of the characters $(\lambda_i^S)_P=\sum((\lambda_i^x)_{Q_i^x\cap P})^P$, for some $x\in S$. For every such $x$, we find $Q_i^x\cap P\le R_{i,x}\le P$ such that $\chi(1)_p=|G:R_{i,x}|_p$. Set $\mu_{i,x}:=((\lambda_i^x)_{Q_i^x\cap P})^{R_{i,x}}$. Now $\chi_P$ is a sum of the characters $\mu_{i,x}^P$.
\end{proof}

\begin{proof}[Proof of \autoref{corsylow}.]
By \autoref{main}, there exist subgroups $Q_i\leq P$ and characters $\lambda_i\in\Irr(Q_i)$ such that $\chi(1)_p=|P:Q_i|$ and $\chi_P=\sum_i\lambda_i^P$. Then $\chi(1)_p=\chi(1)_p\sum_i\lambda_i(1)$ and hence there exists $j$ such that $\lambda_j$ is linear. Since $\lambda_j^P$ is a summand of $\chi_P$ the result follows.
\end{proof}

\begin{proof}[Proof of \autoref{corblock}.]
We apply \autoref{cormain} with $P=D$. Let $\theta\in\Irr(D)$ be a linear constituent of $\chi_D$. Then there exist $Q\le D$ and $\lambda\in\Irr(Q)$ such that $\chi(1)_p=|G:Q|_p$, $\theta$ occurs in $\lambda^D$ and $\lambda^D$ is a summand of $\chi_D$. By Frobenius reciprocity, this yields $\lambda=\theta_Q$ and $(\theta_Q)^D$ is a summand of $\chi_D$. 
Since $\chi$ has positive height, $\chi(1)_p>|G:D|_p$ and therefore $Q<D$. Choose $Q\le R\le D$ such that $|D:R|=p$. 
By Gallagher's theorem (see \cite[Theorem~19.5]{HuppertChar}), 
$(\theta_R)^D$ is a sum of $p$ distinct linear characters and they all appear in $\chi_D$ since $\theta_R$ is a constituent of $(\theta_Q)^R$. 
\end{proof}

\section{Evidence for \autoref{con}}\label{secevidences}

In this section we collect some evidence for \autoref{con}. We start with an analysis of minimal counterexamples. 

\begin{Thm}\label{mincounter}
Let $G$ be a minimal counterexample to \autoref{con} subject to $(|G:\Z(G)|,|G|)$. Then the following holds:
\begin{enumerate}[(a)]
\item\label{pp} $G=\pcore^{p'}(G)$.
\item\label{cp} $G$ is not a direct product of proper subgroups.
\item\label{prim} $\chi$ is primitive.
\item\label{faith} $\chi$ is faithful.
\item\label{cencyc} Every abelian normal subgroup of $G$ is cyclic and central.
\item\label{chartrip} $\pcore_{p'}(G)\le\Z(G)\cap G'$.
\end{enumerate}
\end{Thm}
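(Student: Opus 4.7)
The plan is to proceed by induction on $(|G:\Z(G)|,|G|)$ in lexicographic order, using the following useful inequality: for any subgroup $H\le G$ one has $\Z(G)\cap H\le\Z(H)$, hence $|H:\Z(H)|\le|H\Z(G):\Z(G)|\le|G:\Z(G)|$, and similarly $|\bar G:\Z(\bar G)|\le|G:\Z(G)|$ for any quotient $\bar G$. Thus whenever $|H|<|G|$ (or $|\bar G|<|G|$), the induction hypothesis applies. Parts (\ref{pp})--(\ref{faith}) are then standard reductions. For (\ref{pp}), if $N:=\pcore^{p'}(G)<G$ then $P\le N$; Clifford theory gives $\chi_N=e\sum_i\psi_i$ with $\psi_i$ a single $G$-orbit and $\psi_i(1)_p=\chi(1)_p$ (since $|G:N|$ is coprime to $p$), so induction on $N$ produces the desired decomposition of each $\psi_i|_P$, summing to one for $\chi_P$. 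For (\ref{cp}), a proper decomposition $G=G_1\times G_2$ gives $\chi=\chi_1\boxtimes\chi_2$ and $P=P_1\times P_2$, and induction applied to each factor yields decompositions whose external tensor product is the required decomposition for $\chi_P$, since induction commutes with $\boxtimes$ and indices multiply correctly. Part (\ref{prim}) is the Mackey-formula argument of \autoref{lemmain}, carried out verbatim for integer (rather than nonnegative) combinations. For (\ref{faith}), pass to $\bar G:=G/\Ker(\chi)$, apply induction to the faithful $\bar\chi\in\Irr(\bar G)$, and lift the decomposition of $\bar\chi_{\bar P}$ back by taking preimages of the inducing subgroups (which preserve the index $\chi(1)_p$ since $\bar P\cong P/(P\cap\Ker(\chi))$).

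For (\ref{cencyc}), part (\ref{prim}) ensures $\chi$ is primitive. For any abelian $A\trianglelefteq G$, Clifford theory combined with primitivity forces $\chi_A=\chi(1)\mu$ for a single $G$-invariant linear $\mu\in\Irr(A)$: a nontrivial $G$-orbit of constituents would make $\chi$ induced from a proper inertia subgroup. Hence $A\le\Z(\chi)$, and since $\chi$ is faithful by (\ref{faith}), $\Z(\chi)=\Z(G)$, giving $A\le\Z(G)$. Moreover, $\Z(G)$ acts by scalars in the faithful representation afforded by $\chi$, so embeds into $\CC^\times$ and is cyclic; consequently every abelian normal subgroup is cyclic.

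For (\ref{chartrip}), set $N:=\pcore_{p'}(G)$. The inclusion $N\le G'$ is immediate from (\ref{pp}): since $G=\pcore^{p'}(G)$, the quotient $G/G'$ is a $p$-group, while $NG'/G'$ is a quotient of the $p'$-group $N$, so $NG'/G'$ is trivial. For $N\le\Z(G)$, Clifford and primitivity give $\chi_N=e\phi$ for a $G$-invariant $\phi\in\Irr(N)$, faithful by (\ref{faith}). The plan is to show that $\phi$ must be linear, from which (\ref{cencyc}) immediately yields $N\le\Z(G)$: a faithful linear $\phi$ embeds $N$ into $\CC^\times$, making $N$ cyclic abelian normal. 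Since $p\nmid|N|$ and $\phi$ is $G$-invariant, after possibly replacing the character triple $(G,N,\phi)$ by an isomorphic one we may assume $\phi$ extends to $\hat\phi\in\Irr(G)$; Gallagher's theorem then gives $\chi=\hat\phi\cdot\tilde\beta$ for some $\tilde\beta\in\Irr(G/N)$ with $\chi(1)_p=\tilde\beta(1)_p$. Applying induction to the lex-smaller quotient $G/N$ provides an integer decomposition of $\tilde\beta|_{PN/N}$; lifting the inducing subgroups into $P$ via $P\cap N=1$ and multiplying by $\hat\phi|_P$ through the identity $\hat\phi|_P\cdot\lambda^P=(\hat\phi|_Q\cdot\lambda)^P$ exhibits $\chi_P$ as an integer combination of induced characters from subgroups of index $\chi(1)_p$, contradicting that $G$ is a counterexample. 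Hence $\phi$ must be linear and $N\le\Z(G)$.

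The main obstacle will be the rigorous extension of $\phi$ to $G$ in (\ref{chartrip}): this extension need not exist directly, and working around the cohomological obstruction requires either the machinery of character triple isomorphisms or a parallel argument carried out with projective representations. All other steps are fairly mechanical consequences of Clifford theory, the Mackey formula, and the standard interaction between primitive faithful characters and abelian normal subgroups.
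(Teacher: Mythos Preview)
Parts (a)--(e) are correct and follow the paper's arguments essentially verbatim. The problem is part (f).

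Your plan there is to show that the unique constituent $\phi$ of $\chi_N$ is linear, whence $N$ is abelian and then central by (e). But the argument you sketch does not prove linearity of $\phi$: the contradiction you derive (that $G$ satisfies the conjecture) would go through regardless of $\deg\phi$, so nothing in it forces $\phi$ to be linear. More seriously, the phrase ``after possibly replacing the character triple \ldots\ we may assume $\phi$ extends'' silently changes the ambient group from $G$ to some $G^*$. The decomposition you then build is for $\chi^*_{P^*}$, not for $\chi_P$, and you never transfer it back---yet it is $G$, not $G^*$, that is assumed to be a counterexample. (Even in $G^*$, a linear character of a central $p'$-subgroup $N^*$ need not extend to an irreducible character of $G^*$, so the Gallagher factorisation $\chi^*=\hat\phi^*\cdot\tilde\beta$ is not guaranteed.) You flag the extension as ``the main obstacle'', but in fact both the extension and the transfer-back are genuine gaps, and the second is the harder one.

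The paper sidesteps all of this by a different use of character triples. It assumes for contradiction that $N\not\le\Z(G)$, passes to an isomorphic triple $(G^*,N^*,\theta^*)$ with $N^*\le\Z(G^*)$, and observes that then
\[
|G^*:\Z(G^*)|=|G/N:\Z(G/N)|\le|G:\Z(G)N|<|G:\Z(G)|,
\]
so the minimality hypothesis applies directly to $(G^*,\chi^*)$---no extension of $\theta^*$, no Gallagher, and no passage to $G/N$ are needed. The resulting decomposition of $\chi^*_{P^*N^*}$ is then transferred back to $\chi_{PN}$ using that the character-triple bijection $\ZZ\Irr(PN\mid\theta)\to\ZZ\Irr(P^*N^*\mid\theta^*)$ commutes with induction from intermediate subgroups $Q_iN$, and finally restricted to $P$. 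This explicit transfer is the real content of the argument, and it is precisely what your proposal omits. This, incidentally, is why the minimality is taken with respect to $(|G:\Z(G)|,|G|)$ rather than just $|G|$: it is exactly what makes induction apply to $G^*$.
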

\begin{proof}
First note that \autoref{con} holds for all proper subgroups $H<G$ and all proper quotients $G/N$ since
\begin{align*}
|H:\Z(H)|&\le|H:H\cap\Z(G)|=|H\Z(G):\Z(G)|\le|G:\Z(G)|,\\
|G/N:\Z(G/N)|&\le|G:\Z(G)N|\le|G:\Z(G)|.
\end{align*}
\begin{enumerate}[(a)]
\item Suppose that $N:=\pcore^{p'}(G)<G$. Then $P\le N$. 
By Clifford theory, $\chi_N$ is a sum of characters $\psi\in\Irr(N)$ such that $\chi(1)_p=\psi(1)_p$. 
Since \autoref{con} holds for $\psi$, it must also hold for $\chi$ contradicting the choice of $G$. Consequently, $N=G$.

\item Suppose that $G=G_1\times G_2$ for proper subgroups $G_1$ and $G_2$. Let $P=P_1\times P_2$ and $\chi=\chi_1\times\chi_2$ with $P_i\in\Syl_p(G_i)$ and $\chi_i\in\Irr(G_i)$ for $i=1,2$. 
By hypothesis, we can write $(\chi_i)_{P_i}=\sum_{k=1}^{n_i}a_{ik}\lambda_{ik}^{P_i}$ with $a_{ik}\in\ZZ$ and $\lambda_{ik}\in\Irr(Q_{ik})$ where $Q_{ik}\le P_i$ and $\chi_i(1)_p=|P_i:Q_{ik}|$. Then
\[\chi_P=(\chi_1)_{P_1}\times(\chi_2)_{P_2}=\sum_{k=1}^{n_1}\sum_{l=1}^{n_2}a_{1k}\lambda_{1k}^{P_1}\times a_{2l}\lambda_{2l}^{P_2}=\sum_{k,l}a_{1k}a_{2l}(\lambda_{1k}\times\lambda_{2l})^P\]
and $\chi(1)_p=\chi_1(1)_p\chi_2(1)_p=|P_1:Q_{1k}||P_2:Q_{2l}|=|P:Q_{1k}\times Q_{2l}|_p$ for all $k,l$. This means that \autoref{con} holds for $G$, a contradiction. Consequently, $G$ is not a direct product of proper subgroups.

\item This can be shown as in \autoref{lemmain}.

\item Suppose that $N:=\Ker(\chi)\ne 1$. We may regard $\chi$ as a character of $\overline{G}:=G/N$. 
Since $\overline{P}:=PN/N$ is a Sylow $p$-subgroup of $\overline{G}$, there exist $\overline{Q_i}\le\overline{P}$ and $\lambda_i\in\Irr(\overline{Q_i})$ such that $\chi_{\overline{P}}=\sum_{i=1}^na_i\lambda_i^{\overline{P}}$ for some $a_i\in\ZZ$. Choose $Q_i\le P$ such that $\overline{Q_i}=Q_iN/N$ and $Q_i=Q_iN\cap P$ for $i=1,\ldots,n$. Note that $P\cap N=Q_i\cap N$.
Using the canonical isomorphism $Q_i/Q_i\cap N\to\overline{Q_i}$, we can identify $\lambda_i$ with the corresponding character of $Q_i$. This allow us to write $\chi_P=\sum_ia_i\lambda_i^P$. Since $|Q_iN:Q_i|=|Q_iN:Q_iN\cap P|=|PN:P|$, $Q_i$ is a Sylow $p$-subgroup of $Q_iN$. It follows that 
\[\chi(1)_p=|\overline{G}:\overline{Q_i}|_p=|G:Q_iN|_p=|P:Q_i|.\]
Hence, the claim would hold for $G$. Consequently, $\chi$ must be faithful.

\item Let $N\unlhd G$ be abelian. By \eqref{prim}, $\chi_N=\chi(1)\theta$ for some linear $\theta\in\Irr(N)$. By \eqref{faith}, $\chi$ is faithful and so must be $\theta$. Hence, $N\cong\theta(N)\le\CC^\times$ is cyclic. 
As $\theta$ is $G$-invariant, we obtain $N\le\Z(G)$. 

\item By \eqref{pp} we have $N:=\pcore_{p'}(G)\le G'$. By \eqref{prim}, there is a unique $\theta\in\Irr(N)$ under $\chi$. 
Suppose that $N\nsubseteq\Z(G)$.
By \cite[Problem (6.3)]{Navarro2}, there exists a character triple isomorphism $(G,N,\theta)\cong(G^*,N^*,\theta^*)$ such that $N^*$ is a central $p'$-subgroup of $G^*$ and $\theta^*$ is linear. Since $\pcore_{p'}(G^*/N^*)\cong\pcore_{p'}(G/N)=1$, we have $N^*=\pcore_{p'}(G^*)$. 
It is easy to see that $\Z(G^*/N^*)=\Z(G^*)/N^*$. In particular, 
\[|G^*:\Z(G^*)|=|G/N:\Z(G/N)|\le|G:\Z(G)N|<|G:\Z(G)|,\]
and so the claim holds for $\chi^*$. Also notice that $\chi(1)=\chi^*(1)\theta(1)$ and $\chi(1)_p=\chi^*(1)_p$ (see \cite[p. 87]{Navarro2}).
Let $P^*$ be a Sylow $p$-subgroup of $G^*$ such that $(PN/N)^*=P^*N/N$. Then $(\chi^*)_{P^*}=\sum_ia_i\mu_i^{P^*}$ with $\mu_i\in\Irr(Q_i^*)$, $Q_i^*\le P^*$ and $\chi^*(1)_p=|G^*:Q_i^*|_p$. 
Choose $Q_i\le P$ and $\lambda_i\in\Irr(Q_iN|\theta)$ such that $(Q_iN/N)^*=Q_i^*N^*/N^*$ and $\lambda_i^*=\mu_i\times\theta^*$. Now by \cite[p.~87]{Navarro2}, 
\begin{align*}
(\chi_{PN})^*&=(\chi^*)_{P^*N^*}=(\chi^*)_{P^*}\times\theta^*=\sum_i a_i\mu_i^{P^*}\times\theta^*\\
&=\sum_i a_i(\mu_i\times\theta^*)^{P^*N^*}=\sum_i a_i(\lambda_i^*)^{P^*N^*}=\sum_i(a_i\lambda_i^{PN})^*.
\end{align*}
Since $\psi\mapsto\psi^*$ is a bijection between $\mathbb{Z}\Irr(PN|\theta)$ and $\mathbb{Z}\Irr(P^*N^*|\theta^*)$, we obtain 
\[\chi_P=(\chi_{PN})_P=\sum_i(a_i\lambda_i^{PN})_P=\sum_ia_i(\lambda_Q)^P.\]
Moreover, $\chi(1)_p=\chi^*(1)_p=|G^*:Q^*|_p=|G:Q|_p$.
This contradiction finally shows that $N$ is central. 
\qedhere
\end{enumerate}
\end{proof}

Our next goal, as mentioned in the introduction, is to prove the statement of \autoref{main} for the simple groups $\PSL(2,q)$ and $\Sz(q)$. We refer to this statement as the \emph{strong form} of \autoref{con} (we remind the reader that there are counterexamples to this stronger claim). 
The advantage of working with the strong form of the conjecture is that the claim carries over to quotients.

\begin{Prop}\label{normal}
Let $N\unlhd G$. If the strong form of \autoref{con} holds for $G$, then the same is true for $G/N$.
\end{Prop}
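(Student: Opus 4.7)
Let $\bar\chi\in\Irr(G/N)$ and inflate to $\chi\in\Irr(G)$, so that $N\le\Ker(\chi)$ and $\chi(1)_p=\bar\chi(1)_p$. Choose a Sylow $p$-subgroup $P$ of $G$; then $\bar P:=PN/N\cong P/L$ is Sylow in $G/N$, where $L:=P\cap N$. Note that $L\unlhd P$ since $N\unlhd G$. Apply the strong form of \autoref{con} to $\chi$ to obtain a decomposition
\[\chi_P=\sum_{i=1}^n\lambda_i^P,\qquad \lambda_i\in\Irr(Q_i),\ Q_i\le P,\ \chi(1)_p=|P:Q_i|.\]
The goal is to push this decomposition down to $\bar P$ by showing that $L\le Q_i$ and $L\le\Ker(\lambda_i)$ for every $i$, so that each $\lambda_i$ factors through a subgroup $\bar Q_i:=Q_i/L\le\bar P$.

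The central step is the claim that $L\le\Ker(\lambda_i^P)$ for each $i$. Since $N\le\Ker(\chi)$, we have $L\le\Ker(\chi_P)$, hence every irreducible constituent of $\chi_P$ is inflated from $P/L$. The decomposition $\chi_P=\sum_i\lambda_i^P$ is an equality of \emph{proper characters} (nonnegative integer combination of irreducibles), so every irreducible constituent of each $\lambda_i^P$ is already a constituent of $\chi_P$ and therefore trivial on $L$. This forces $L\le\Ker(\lambda_i^P)$. From the standard identification $\Ker(\lambda_i^P)=\bigcap_{g\in P}\Ker(\lambda_i)^g\le\Ker(\lambda_i)\le Q_i$, we conclude $L\le\Ker(\lambda_i)\le Q_i$, as required.

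Setting $\bar Q_i:=Q_i/L\le\bar P$ and letting $\bar\lambda_i\in\Irr(\bar Q_i)$ be the character corresponding to $\lambda_i$ under $Q_i/L\cong Q_iN/N$, induction commutes with the quotient by $L$ because $L$ is normal in $P$ and contained in $Q_i$; hence $\lambda_i^P$ is inflated from $\bar\lambda_i^{\bar P}$. Translating the displayed identity through the canonical isomorphism $P/L\cong\bar P$ yields $\bar\chi_{\bar P}=\sum_i\bar\lambda_i^{\bar P}$, while $|\bar P:\bar Q_i|=|P:Q_i|=\chi(1)_p=\bar\chi(1)_p$, which is exactly the strong form of \autoref{con} for $\bar\chi$. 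The only delicate point is the kernel argument in the previous paragraph, and it is precisely there that \emph{proper} characters are needed; with mere integer combinations, cancellation among the $\lambda_i^P$ could hide constituents on which $L$ acts nontrivially, which is why this reduction is not known for the full (integral) conjecture.
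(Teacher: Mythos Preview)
Your proof is correct and follows essentially the same route as the paper: both arguments inflate $\bar\chi$, use the kernel inclusion $P\cap N\le\Ker(\chi_P)\le\Ker(\lambda_i^P)=\bigcap_{g\in P}\Ker(\lambda_i)^g$ (valid precisely because the decomposition is into \emph{proper} characters) to force $L\le Q_i$ and $L\le\Ker(\lambda_i)$, and then descend each $\lambda_i^P$ to $\bar P$. The only cosmetic difference is that where you invoke the standard fact that induction commutes with the quotient by a normal subgroup contained in $Q_i$, the paper verifies $\lambda^{\overline P}(xN)=\lambda^P(x)$ by an explicit one-line computation; your remark about why this reduction fails for the integral version is a nice addition not spelled out in the paper.
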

\begin{proof}
We use the bar convention $\overline{H}:=HN/N$ for $H\le G$. Let $P$ be a Sylow $p$-subgroup of $G$, so that $\overline{P}$ is a Sylow $p$-subgroup of $\overline{G}$. We identify the characters $\chi\in\Irr(\overline{G})$ with their inflation to $G$. By hypothesis, $\chi_P$ is a sum of characters $\lambda^P$ where $\lambda\in\Irr(Q)$ and $Q\le P$ such that $\chi(1)_p=|G:Q|_p$. 
Since
\[P\cap N\le\Ker(\chi_P)\le\Ker(\lambda^P)=\bigcap_{x\in P}\Ker(\lambda)^x\le\Ker(\lambda),\]
we can consider $\lambda$ as a character of $\overline{Q}\cong Q/Q\cap N$. Moreover $P\cap N=Q\cap N$. For $x\in P$ we compute
\begin{align*}
\lambda^{\overline{P}}(xN)&=\frac{1}{|\overline{Q}|}\sum_{\substack{yN\in\overline{P}\\x^yN\in\overline{Q}}}\lambda(x^yN)=\frac{|Q\cap N|}{|Q|}\sum_{\substack{y(P\cap N)\in P/P\cap N\\x^y\in Q}}\lambda(x^y)\\
&=\frac{|Q\cap N|}{|Q||P\cap N|}\sum_{\substack{y\in P\\x^y\in Q}}\lambda(x^y)=\frac{1}{|Q|}\sum_{\substack{y\in P\\x^y\in Q}}\lambda(x^y)=\lambda^P(x).
\end{align*}
Hence, $\chi_{\overline{P}}$ is the sum of the induced characters $\lambda^{\overline{P}}$. Since $Q\cap N=P\cap N$ is a Sylow $p$-subgroup of $N$, we also obtain $\chi(1)_p=|G:Q|_p=|G:QN|_p=|\overline{G}:\overline{Q}|_p$.
\end{proof}

In the introduction we explained how \autoref{con} follows from Kessar--Malle~\cite{KessarMalle} when $P$ is abelian. If $P$ is cyclic, the strong form of \autoref{con} can be shown without using the classification of finite simple groups.

\begin{Prop}\label{cyclic}
If $P$ is cyclic, then the strong form of \autoref{con} holds for $G$. 
\end{Prop}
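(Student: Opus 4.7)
The plan is to reduce the strong form to a simple vanishing statement and then apply standard block theory. Since $P$ is cyclic, there is a unique subgroup $Q\le P$ with $|P:Q|=\chi(1)_p$. For abelian $P$ and $Q\le P$, a character $\phi$ of $P$ is of the form $\psi^P$ for some (honest, not merely virtual) character $\psi$ of $Q$ if and only if $\phi$ vanishes on $P\setminus Q$: writing $\phi=\sum_\lambda b_\lambda\lambda$ over $\lambda\in\Irr(P)$, such vanishing gives $\phi\cdot\alpha=\phi$ for every linear $\alpha\in\Irr(P)$ trivial on $Q$, which forces $b_\lambda$ to depend only on $\lambda|_Q$, and the common non-negative value is the multiplicity of $\lambda|_Q$ in $\psi$. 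So the strong form for cyclic $P$ reduces to showing $\chi(x)=0$ for every $x\in P\setminus Q$.

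For the vanishing, let $B$ be the $p$-block of $G$ containing $\chi$, with defect group $D$; after conjugation we may assume $D\le P$, so $D$ is cyclic. Brauer's classical theory of blocks with cyclic defect groups gives that every $\chi\in\Irr(B)$ has height zero, i.e.\ $\chi(1)_p=|G:D|_p=|P:D|$. Hence $|Q|=|D|$, and $Q=D$ by uniqueness of subgroups of a given order in a cyclic $p$-group. Now Brauer's second main theorem, combined with the fact that any defect group of any block of $C_G(u)$ contains the central $p$-element $u\in\pcore_p(C_G(u))$, implies that $\chi$ vanishes on every $p$-element of $G$ whose cyclic subgroup is not $G$-conjugate into $D$. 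Any $x\in P\setminus Q=P\setminus D$ has order strictly greater than $|D|$ and so cannot be $G$-conjugate to an element of $D$; therefore $\chi(x)=0$.

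I expect the first reduction to require the most care, since one must check that the non-negativity of the coefficients on $P$ transfers to non-negativity of the coefficients on $Q$; but the Fourier-type argument above makes this automatic and gives the strong form rather than only Conjecture \ref{con}. The block-theoretic ingredients are all classical and, crucially, avoid both Kessar--Malle and the classification of finite simple groups, in contrast to the abelian-defect case mentioned in the introduction.
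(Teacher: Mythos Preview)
Your proof is correct and follows essentially the same line as the paper's. Both arguments take $D\le P$ a defect group of the block of $\chi$, use Brauer's classical height-zero result for cyclic defect to get $\chi(1)_p=|P:D|$, observe that $\chi$ vanishes on $P\setminus D$ via Brauer's second main theorem (the paper cites \cite[Corollary~5.9]{Navarro}), and then verify that the resulting expression is an honest induced character---you via the invariance $\chi_P\cdot\alpha=\chi_P$ for $\alpha\in\Irr(P/D)$, the paper via the equivalent computation $[\lambda,\chi_D]=|P:D|\,[\hat\lambda,\chi_P]$ with $\hat\lambda$ an extension of $\lambda$ to $P$.
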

\begin{proof}
Let $B$ be the $p$-block of $G$ containing $\chi$. Let $D\le P$ be a defect group of $B$. 
Since the elements of $P\setminus D$ are not conjugate to elements of $D$, $\chi_P$ vanishes outside $D$ by \cite[Corollary~5.9]{Navarro}. By the definition of character induction, it follows that 
\[\chi_P=\frac{1}{|P:D|}(\chi_D)^P.\] 
Let $\lambda\in\Irr(D)$ be a constituent of $\chi_D$. Then $\lambda$ extends to a (linear) character $\hat\lambda\in\Irr(P)$.
By Frobenius reciprocity,
\[[\lambda,\chi_D]=[\hat\lambda,(\chi_D)^P]=|P:D|[\hat\lambda,\chi_P]\]
is divisible by $|P:D|$. Hence, $\frac{1}{|P:D|}\chi_D$ is a proper character of $D$. 
Since $\chi$ has height zero in $B$ (this was known to Brauer~\cite[6C]{BrauerBlSec2} and does not require \cite{KessarMalle}), we have $\chi(1)_p=|G:D|_p$. 
\end{proof}

We can now prove the claimed result for the groups $\PSL(2,q)$ and $\Sz(q)$.

\begin{Prop}\label{lineargroups}
The strong form of \autoref{con} holds for $G=\SL(2,q)$, $\PSL(2,q)$ and $\Sz(2^{2n+1})$, where $q$ is a prime power and $n\geq 1$.
\end{Prop}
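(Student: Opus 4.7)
The plan is to verify the strong form case by case, using \autoref{cyclic} whenever the Sylow $p$-subgroup is cyclic and \autoref{normal} to pass from $\SL(2,q)$ to $\PSL(2,q)$. The cyclic-Sylow cases cover $\SL(2,q)$ with $p\nmid q$, where $P$ lies in a cyclic torus of order $q\pm 1$, and $\Sz(q)$ with $p$ odd, where the pairwise coprimality of the factors $q-1$, $q-r+1$, $q+r+1$ of $|\Sz(q)|/q^2$ (with $r=2^{n+1}$) places $P$ inside one of the cyclic maximal tori.

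For $\SL(2,q)$ with $p\mid q$, the Sylow $P$ is elementary abelian of order $q=p^k$, and the classical character degrees $1$, $q$, $q\pm 1$ and $(q\pm 1)/2$ all have $p$-part in $\{1,q\}$. Thus either $\chi_P$ is itself a sum of irreducible $P$-characters (take $Q=P$), or $\chi$ has $p$-defect zero and Brauer--Nesbitt forces $\chi_P=(\chi(1)/|P|)\cdot 1_{\{1\}}^P$. For $\Sz(q)$ with $p=2$, Suzuki's character table lists the degrees $1$, $q^2+1$, $(q-1)(q\pm r+1)$, $q^2$ and $r(q-1)/2=2^n(q-1)$; the first three families are odd-dimensional (so again $Q=P$) and the Steinberg character has $2$-defect zero, leaving only the pair of exceptional characters of degree $2^n(q-1)$.

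This last case is the main obstacle: here $\chi(1)_2=2^n$ and one must find $Q\le P$ with $|P:Q|=2^n$. A sum-of-squares count shows that $P$ has exactly $q$ linear characters and $2(q-1)$ non-linear characters, all of degree $2^n$. Since $P$ is an M-group (every $p$-group is), each non-linear character of $P$ is already induced from a linear character on a subgroup of index $2^n$, so it suffices to prove that $\chi_P$ has no linear constituent. Because $P/[P,P]=P/\Z(P)$ is elementary abelian, every non-linear $\mu\in\Irr(P)$ has non-trivial central character, and therefore the multiplicity of $1_{\Z(P)}$ in $\chi_{\Z(P)}$ equals $\sum_{\mu\text{ linear}}[\mu,\chi_P]$. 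Using that $\Z(P)\setminus\{1\}$ is precisely the unique involution class of $\Sz(q)$, a direct calculation yields
\[ [1_{\Z(P)},\chi_{\Z(P)}]=\frac{(q-1)(2^n+\chi(z_0))}{q}, \]
so the required vanishing of all linear multiplicities in $\chi_P$ is equivalent to $\chi(z_0)=-2^n$ for $1\ne z_0\in\Z(P)$, which is precisely the value recorded in Suzuki's character table.
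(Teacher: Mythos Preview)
Your argument has a genuine gap for $\SL(2,q)$: the claim that the Sylow $p$-subgroup is cyclic whenever $p\nmid q$ is false for $p=2$. When $q$ is odd, both $q-1$ and $q+1$ are even, so the Sylow $2$-subgroup does not lie inside either cyclic torus; in fact it is a generalized quaternion group of order $(q^2-1)_2\ge 8$. This case cannot be dismissed by \autoref{cyclic}, and it is precisely the case that carries most of the work in the paper's proof. There one has to distinguish the characters $\chi_k$ of degree $q\pm 1$ according to the parity of $k$, and in the even-$k$ case produce an explicit decomposition of $\chi_P$ as a \emph{non-negative} combination of characters induced from the three maximal subgroups of the quaternion group (the naive decomposition one first writes down has a negative coefficient that must be absorbed). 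Your outline gives no mechanism for handling any of this.

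A smaller issue appears in the Suzuki case: the sentence ``A sum-of-squares count shows that $P$ has exactly $q$ linear characters and $2(q-1)$ non-linear characters, all of degree $2^n$'' overstates what the count proves. The identity $q+\sum d_i^2=q^2$ is consistent with all $d_i=2^n$ but does not force it; you need a separate argument (or a reference such as Sagirov's) that $P$ has no non-linear characters of degree less than $2^n$. Without this, the monomiality step only yields inductions from subgroups of index $\mu(1)$, not necessarily of index $2^n$. Your central-character computation showing $[1_{\Z(P)},\chi_{\Z(P)}]=0$ is correct and matches the paper's approach, so once the degree fact is supplied the Suzuki case is complete.
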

\begin{proof}
By \autoref{normal} it is enough to consider $G=\SL(2,q)$ and $\Sz(2^{2n+1})$. We start by considering $G=\SL(2,q)$. If $2<p\nmid q$, then $G$ has cyclic Sylow $p$-subgroups and the claim follows from \autoref{cyclic}. Let $p\mid q$. The Steinberg character $\chi\in\Irr(G)$ has $p$-defect zero and fulfills the claim with $Q=1$ by Brauer--Nesbitt's theorem mentioned in the introduction. Every other character $\chi\in\Irr(G)$ has $p'$-degree and fulfills the claim for trivial reasons. Finally, let $p=2\nmid q$. The Sylow $2$-subgroup 
\[P=\langle x,y\mid x^{2^n}=y^4=1,\ x^y=x^{-1}\rangle\] 
is a quaternion group of order $2^{n+1}=(q^2-1)_2$ (recall that $|G|=q^3-q$). Let $X:=\langle x\rangle$ and $\lambda\in\Irr(X)$ be faithful. Then $\Irr(P)$ consists of four linear characters and the induced characters $(\lambda^k)^P$ of degree $2$ for $k=1,\ldots,2^{n-1}-1$. 
The character table of $G$ depends on $q\equiv\pm1\pmod{4}$ (equivalently $q\equiv\pm1\pmod{2^n}$).
It suffices to consider $\chi\in\Irr(G)$ with even degree. Since $y$ is conjugate to $x^{2^{n-2}}$ in $G$, the restrictions $\chi_P$ are determined by the following values:
\[
\begin{array}{c|ccc}
&1&z&x^j\\\midrule
\chi_k&q\pm1&(-1)^k(q\pm1)&\pm(\lambda^k)^P(x^j)\\
\psi_1&q\mp1&-(q\mp1)&0\\
\psi_2&\frac{q\mp1}{2}&-\frac{q\mp1}{2}&0
\end{array}
\]
where $z=x^{2^{n-1}}$ and $1\le j,k\le 2^{n-1}-1$. Let $Z:=\langle z\rangle=\Z(P)=\Z(G)$ and $W:=\langle x^{2^{n-2}}\rangle\cong C_4$. Then $\psi_1=\frac{q\mp1}{2^n}(\lambda_Z)^P$ and $\psi_2=\frac{q\mp1}{2^n}(\lambda_W)^P$ fulfill the claim since $\psi_1(1)_2=(q\mp1)_2=2^n=|P:Z|$ and $\psi_2(1)_2=2^{n-1}=|P:W|$. 
Now suppose that $k$ is odd, so that $(\chi_k)_Z=(q\pm1)\lambda_Z$. Then $\chi_k$ has no linear constituents, since those lie over $1_Z$. Hence, $\chi_k$ is a sum of characters induced from subgroups of index $2=(q\pm1)_2=\chi_k(1)_2$. 

The case $k\equiv 0\pmod{2}$ is more complicated. Here $\chi_k(y)=\pm(-1)^{k/2}2=:(-1)^s2$. Let $Y_1:=\langle x^2,y\rangle\cong Q_{2^n}$, $Y_2:=\langle x^2,xy\rangle\cong Q_{2^n}$ and $\mu_i\in\Irr(Y_i)$ such that $\mu_i(x^2)=1$ and $\mu_1(y)=\mu_2(xy)=-1$. We compute
\[\chi_k=\frac{q\mp1}{2^n}\bigl(1_Z\bigr)^P\pm(\lambda^k)^P+(\mu_1^s)^P+(\mu_2^s)^P-(1_{P'})^P,\]
where we recall that $P'=\langle x^2\rangle$.
Since $k$ is even, $\lambda^k$ lies over $1_Z$ and so does $1_{P'}$. On the other hand, $\lambda^k$ does not lie over $1_{P'}$ since $k<2^{n-1}$. It follows that $(1_Z)^X\pm\lambda^k-1_{P'}^X$ is a proper character of $X$ and $\chi_k$ is a sum of characters induced from $X$, $Y_1$ and $Y_2$. All have index $2=\chi_k(1)_2$ in $P$. 

Now let $G=\Sz(q)$ where $q=2^{2n+1}$ and $n\geq 1$. The Sylow $p$-subgroups for $p>2$ are cyclic by \cite[Theorem~9]{Suzukidouble}. Thus, again we restrict to $p=2$. The Sylow $2$-subgroup $P$ is a so-called Suzuki $2$-group of order $q^2$ such that $Z:=\Z(P)=P'=\Phi(P)$ is elementary abelian of order $q$ and contains all involutions of $P$ (see \cite[Theorem~7]{Suzukidouble}).
The character table of $G$ is given in \cite[Theorem~13]{Suzukidouble}. It can be seen that there are only two characters $\chi,\overline{\chi}\in\Irr(G)$ of even degree and not of $2$-defect $0$.
The values on $P$ are $\chi(1)=2^n(q-1)$, $\chi(z)=-2^n$ and $\chi(x)=2^n\sqrt{-1}$ where $z\in Z\setminus\{1\}$ and $x\in P\setminus Z$. 
This implies $\chi_Z=2^n(\rho_Z-1_Z)$ where $\rho_Z$ is the regular character of $Z$. Therefore, $\chi_P$ has no linear constituents, because those must lie over $1_{P'}=1_Z$. On the other hand, it has been shown in \cite{Sagirov} that all non-linear characters of $P$ have degree $2^n$. Hence, $\chi_P$ is a sum of characters induced from subgroups of index $2^n=\chi(1)_2$.
\end{proof}

As a final result we verify the strong form of \autoref{con} if $P$ is a dihedral or quaternion $2$-group. Notice that the proof of \autoref{mincounter} applies verbatim to the strong form. This remark will be used in the following.

\begin{Prop}\label{dihedral}
Let $P$ be a Sylow $p$-subgroup of $G$ and suppose that $P$ is a dihedral or quaternion $2$-group including the Klein four-group. Then the strong form of \autoref{con} holds for $G$. 
\end{Prop}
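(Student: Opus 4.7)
The plan is to argue by minimal counterexample, combining the structural information of \autoref{mincounter} with the classification of finite groups with dihedral or generalized quaternion Sylow $2$-subgroups, and then reducing to cases already handled by \autoref{lineargroups} or by direct character-table inspection. Specifically, I would let $G$ be a minimal counterexample to the strong form of \autoref{con} subject to $(|G:\Z(G)|,|G|)$, and pick $\chi\in\Irr(G)$ and $P\in\Syl_2(G)$ witnessing this. By the remark preceding the statement, the proof of \autoref{mincounter} applies verbatim, so $\chi$ is primitive and faithful, $G$ is not a direct product of proper subgroups, $\pcore^{2'}(G)=G$, every abelian normal subgroup of $G$ is central and cyclic, and $\pcore_{2'}(G)\le\Z(G)\cap G'$. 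The case $\chi(1)_2=1$ is trivial ($Q:=P$ works), so I may assume $\chi(1)$ is even.

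Next I would invoke the Gorenstein--Walter theorem for dihedral (including Klein-four) Sylow $2$-subgroups, and Brauer--Suzuki together with its subsequent refinements for generalized quaternion Sylow $2$-subgroups. In the dihedral case, $G/\pcore_{2'}(G)$ is either a $2$-group, $A_7$, or sits between $\PSL(2,q)$ and $P\Gamma L(2,q)$ for some odd prime power $q$; in the quaternion case, $G/\pcore_{2'}(G)$ is $\SL(2,q)$ for some odd $q$ or the double cover $2.A_7$. Combining this with the minimal-counterexample constraints (centrality and cyclicity of $\pcore_{2'}(G)$, indecomposability as a direct product) and the known Schur multipliers of $\PSL(2,q)$, $\SL(2,q)$, $A_6\cong\PSL(2,9)$ and $A_7$, I would narrow $G$ itself down to a member of
\begin{equation*}
\{\PSL(2,q),\ \SL(2,q),\ 3.A_6,\ A_7,\ 3.A_7,\ 2.A_7\},
\end{equation*}
plus $G=A_5$ in the Klein-four case and possibly a short list of extensions sandwiched between $\PSL(2,q)$ and $P\Gamma L(2,q)$.

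For $G\in\{\PSL(2,q),\SL(2,q)\}$ the strong form is exactly \autoref{lineargroups}, and the intermediate extensions inside $P\Gamma L(2,q)$ reduce to the base case via \autoref{normal} together with Clifford theory applied to the normal subgroup $\PSL(2,q)$ (using primitivity of $\chi$). The Klein-four case $G=A_5$ is verified directly from the $A_5$ character table: the only character of even degree is the one of degree $4$, and its restriction to $V_4$ is the regular character, which is induced from the trivial subgroup. The four remaining cases $3.A_6$, $A_7$, $3.A_7$, $2.A_7$ I would dispatch by explicit character-table inspection, relying on the convenient fact that every irreducible character of degree $2$ of a dihedral or quaternion $2$-group is induced from its cyclic maximal subgroup, and that the four linear characters can be paired into six sums of linear characters induced from the three index-$2$ subgroups.

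The main obstacle is the last verification step for the exceptional quasisimple groups $3.A_6$, $A_7$, $3.A_7$, $2.A_7$, which admits no conceptual reduction and requires explicit (though finite) computation in their character tables. A secondary difficulty is a careful Clifford-theoretic argument to confirm that the minimal-counterexample hypotheses really suffice to rule out all but the stated small list of intermediate extensions inside $P\Gamma L(2,q)$.
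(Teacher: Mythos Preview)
Your overall architecture matches the paper's: argue by minimal counterexample, invoke the reductions of \autoref{mincounter}, apply the Gorenstein--Walter/Brauer--Suzuki classification, dispose of $\SL(2,q)$ and $\PSL(2,q)$ via \autoref{lineargroups}, and finish the handful of exceptional quasisimple groups by direct inspection.

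The genuine gap is your treatment of $\PGL(2,q)$, which you lump into ``intermediate extensions inside $P\Gamma L(2,q)$'' and claim to reduce to the base case ``via \autoref{normal} together with Clifford theory applied to the normal subgroup $\PSL(2,q)$ (using primitivity of $\chi$)''. This does not work. \autoref{normal} passes the strong form \emph{down} to quotients, not \emph{up} to overgroups, so it cannot take you from $\PSL(2,q)$ to $\PGL(2,q)$. Primitivity of $\chi\in\Irr(\PGL(2,q))$ does force $\psi:=\chi_{\PSL(2,q)}$ to be irreducible, but then $\chi(1)_2=\psi(1)_2$ while $|P:P_0|=2$ for $P_0:=P\cap\PSL(2,q)$; the decomposition of $\psi_{P_0}$ furnished by \autoref{lineargroups} uses subgroups $Q\le P_0$ with $|P_0:Q|=\psi(1)_2$, hence $|P:Q|=2\chi(1)_2$, which is the wrong index for $\chi$. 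Clifford theory gives no mechanism to convert this into a decomposition of $\chi_P$ using subgroups of index $\chi(1)_2$ in $P$.

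The paper resolves $\PGL(2,q)$ in the opposite direction: it lifts to $G=\GL(2,q)$, whose Sylow $2$-subgroup is \emph{not} dihedral (semidihedral when $q\equiv -1\pmod 4$, and $C_{2^n}\wr C_2$ when $q\equiv 1\pmod 4$), establishes the strong form there by explicit computation with the character table of $\GL(2,q)$, and only then descends to $\PGL(2,q)$ via \autoref{normal}. That explicit $\GL(2,q)$ calculation is the substantive new work in the proof; the rest of your outline is essentially correct and coincides with the paper's.
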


\begin{proof}
Let $G$ be a finite group with dihedral or quaternion Sylow $2$-subgroup $P$. By \autoref{main}, we may assume that $G$ is non-solvable. We argue by induction on $(|G:\Z(G)|,|G|)$. Since every subgroup and every quotient of $P$ is a cyclic, dihedral or quaternion group, we may apply the reduction methods from \autoref{mincounter} (the character triple isomorphism in \autoref{mincounter}\eqref{chartrip} preserves $P$). Specifically, we assume that $\pcore_{2'}(G)\le\Z(G)\cap G'$,  $G=\pcore^{2'}(G)$ and $\chi$ is primitive (but not necessarily faithful). Now the Gorenstein--Walter theorem shows that $G$ is a Schur cover of $A_7$, $\PSL(2,q)$ or of $\PGL(2,q)$ where $q>3$ is an odd prime power (see \cite[Theorems~6.8.7 and 6.8.9]{Suzuki2}).
The first case and the exceptional cover $3.\PSL(2,9)=3.A_7$ can be checked by computer while $\PSL(2,q)$ has been considered in \autoref{lineargroups}. It remains to prove the claim for $\PGL(2,q)$. Thanks to \autoref{normal} it suffices to consider $G=\GL(2,q)$. The reader can find the following well-known facts in \cite[§5.2]{FultonHarris}, for instance.

\textbf{Case~1:} $G=\GL(2,q)$ with $q\equiv -1\pmod{4}$.\\
Here $P=\langle x,y\mid x^{2^n}=y^2=1,\ x^y=x^{2^{n-1}-1}\rangle$ is a semidihedral group of order $2^{n+1}$ where $2^n=(q^2-1)_2$. Let $z:=x^{2^{n-1}}$ be the central involution in $G$. All non-central involutions are conjugate to $y$ and all elements of order $4$ are conjugate to $x^{2^{n-2}}$ in $G$. The characters of degree $q+1$ are induced from proper subgroups. 
There remains only one family of characters to consider. Let $X:=\langle x\rangle$ and $\lambda\in\Irr(X)$ be faithful. 
The restrictions $\chi_P$ assume the following values
\[
\begin{array}{c|cccc}
&1&z&x^j&y\\\midrule
\chi_k&q-1&(-1)^k(q-1)&-(\lambda^k)^P(x^j)&0
\end{array}
\]
where $k\not\equiv 0\pmod{2^{n-1}}$. 
If $k$ is odd, then $\chi_k$ does not have linear constituents and the claim follows since $\chi_k(1)_2=(q-1)_2=2$. If on the other hand $k\equiv 0\pmod{2}$, then
\[\chi_k=\frac{q+1}{2^{n-1}}(\lambda_W)^P-\lambda^P=\Bigl(\frac{q+1}{2^{n-1}}(\lambda_W)^X-\lambda\Bigr)^P\]
where $W:=\langle x^{2^{n-2}}\rangle\cong C_4$. The claim follows as before.

\textbf{Case~2:} $G=\GL(2,q)$ with $q\equiv 1\pmod{4}$.\\
In this case 
\[P=\langle x,y,z\mid x^{2^n}=y^{2^n}=z^2=[x,y]=1,\ x^z=y\rangle\cong C_{2^n}\wr C_2,\]
where $2^n=(q-1)_2$. This group can be realized conveniently by $x=\diag(\zeta,1)$, $y=\diag(1,\zeta)$ and $z=\begin{sm}
0&1\\1&0
\end{sm}$ where $\zeta\in\FF_q^\times$ has order $2^n$. 
Note that $xy\in\Z(P)\le\Z(G)$ and $z$ is conjugate to $x^{2^{n-1}}$ in $G$. Moreover, $xz$ has order $2^{n+1}$ and is conjugate to $(xz)^{2^n+1}=-xz$. Also, $P/Z\cong D_{2^{n+1}}$.
Let $\lambda\in\Irr(\langle xz\rangle)$ be faithful. The values of $\chi_P$ are
\[
\begin{array}{c|cccc}
&1&(xy)^i&x^iy^j&(xz)^k\\\midrule
\chi_l&q-1&(q-1)\lambda^l(xy)^i&0&-(\lambda^l)^P((xz)^k)
\end{array}
\]
where $i\not\equiv j\pmod{2^n}$, $k\equiv 1\pmod{2}$ and $l\not\equiv 0\pmod{2^{n-1}}$. 
If $l$ is odd, then $\chi_l$ vanishes on $(xz)^k$. Thus, $\chi_l=\frac{q-1}{2^n}(\lambda^l)^P$ and $\chi_l(1)_2=(q-1)_2=2^n=|P:\langle xz\rangle|$. If $l$ is even, then $\chi_l(xz)=-2\lambda^l(xz)=(\lambda^{l+2^n})^P(xz)$ and
\[\chi_l=\frac{q-1-2^n}{2^{n+1}}(\lambda^l)^P+\frac{q-1+2^n}{2^{n+1}}(\lambda^{l+2^n})^P.\qedhere\]
\end{proof}

By analyzing the Ree groups $G={^2G_2(q)}$ and making use of Walter's theorem, the strong form of \autoref{con} can be shown for all groups with abelian Sylow $2$-subgroups. Since \autoref{con} holds for abelian $P$, we omit the details.

Using GAP~\cite{GAP48}, \texttt{4ti2}~\cite{4ti2} and its GAP interface~\cite{GAP4ti2}, we checked the strong form of \autoref{con} for all groups of order at most $2000$. Moreover, none of the perfect groups of order at most $10^6$ are minimal counterexamples in the sense of \autoref{mincounter}.
Additionally, the stated form of \autoref{con} has been verified for all simple groups up to $\PSL(3,13)$ and all sporadic groups up to $Co_3$ (with respect to the group order). 

\section*{Acknowledgment}
We thank Gabriel Navarro for sharing his insights on a previous version of this paper, and for requesting more computer checking.
We appreciate Eugenio Giannelli's effort to prove corresponding results for symmetric groups. Moreover, Alexander Hulpke has kindly provided an updated database~\cite{Hulpke} of all perfect groups of order at most $10^6$. Thomas Breuer has introduced the authors to numerous tricks regarding character tables in GAP. The first author is supported by the research training group GRK2240: Algebro-geometric Methods in Algebra, Arithmetic and Topology of the German Research Foundation.
The second author is supported by the German Research Foundation (\mbox{SA 2864/1-2} and \mbox{SA 2864/3-1}).

\end{document}